\documentclass[11pt]{elsarticle}

\usepackage[utf8]{inputenc}
\usepackage[english]{babel}
\usepackage{enumerate}
\usepackage{latexsym}
\usepackage{amsthm,amsmath}
\usepackage{amssymb}
\usepackage{aliascnt} 
\usepackage{multicol}
\usepackage{tikz}
\usepackage{float} 

\usepackage[bookmarks]{hyperref} 
\hypersetup{
	pdftitle={2-switch-degree Classification of Split Graphs},
	pdfauthor={V.N. Schvollner},
	colorlinks=true,
	linkcolor=blue,
	citecolor=red,
	filecolor=cyan,
	urlcolor=magenta
}

\usetikzlibrary{babel,decorations.pathreplacing,calc,positioning,shapes.geometric}

\usepackage{cleveref}

\definecolor{10}{RGB}{115,59,171}
\definecolor{8}{RGB}{212,122,240}
\definecolor{7}{RGB}{99,212,119}
\definecolor{6}{RGB}{183,240,164}
\definecolor{D}{RGB}{255,162,79}
\definecolor{E}{RGB}{255,84,0}
\definecolor{F}{RGB}{158,248,255}
\definecolor{G}{RGB}{128,135,255}
\definecolor{I}{RGB}{187,255,0}
\definecolor{A}{cmyk}{.9,.05,.4,0}
\definecolor{B}{RGB}{150,30,150}
\definecolor{C}{RGB}{186,155,189}
\definecolor{9}{RGB}{0,180,60}
\definecolor{0}{RGB}{30,123,191}
\definecolor{1}{RGB}{255,113,102}
\definecolor{2}{RGB}{41,199,92}
\definecolor{3}{RGB}{242,207,16}
\definecolor{5}{RGB}{255,15,154}
\definecolor{4}{rgb}{.8,0,.8}

\definecolor{Red}{rgb}{1,0.4,0.4}
\definecolor{Green}{rgb}{.1,.5,.1}
\definecolor{Blue}{rgb}{.1,.1,.5}
\definecolor{blue}{RGB}{0,0,255}
\definecolor{Yellow}{rgb}{.8,.4,0}
\definecolor{X}{rgb}{.8,.4,0}
\definecolor{H}{rgb}{0,0,1}
\definecolor{light}{rgb}{.67,.84,.90}
\definecolor{Cyan}{rgb}{0,1,1}
\definecolor{Purple}{rgb}{.5,0,.5}
\definecolor{Purple2}{rgb}{.5,.2,.5}
\definecolor{white}{rgb}{1.0,1.0,1.0}
\definecolor{Purple2}{rgb}{.8,.4,0}
\definecolor{Amarillo}{RGB}{225,191,73}
\definecolor{Celeste}{RGB}{117,170,219}
\definecolor{Castano}{RGB}{232,53,17}
\definecolor{Black}{RGB}{0,0,0}
\definecolor{White}{RGB}{255,255,255}
\definecolor{gris}{rgb}{.5,.5,.5}

\newtheorem{theorem}{Theorem}[section]
\newaliascnt{corollary}{theorem}
\newtheorem{corollary}[corollary]{Corollary}
\aliascntresetthe{corollary}
\newaliascnt{lemma}{theorem}
\newtheorem{lemma}[lemma]{Lemma}
\aliascntresetthe{lemma}
\newaliascnt{proposition}{theorem}
\newtheorem{proposition}[proposition]{Proposition}
\aliascntresetthe{proposition}

\pgfdeclarelayer{background2}
\pgfdeclarelayer{background}
\pgfdeclarelayer{foreground}
\pgfsetlayers{background2,background,main,foreground}

\begin{document}
	\begin{frontmatter}
		\title {
			Induced paths and cycles in factor graphs \\ of split graphs
		}
		\address[IMASL]{Instituto de Matem\'atica Aplicada San Luis - CONICET, UNSL (San Luis, Argentina)}
		\address[DEPTO]{Departamento de Matem\'atica, Universidad Nacional de San Luis, San Luis, Argentina.}

		\author[IMASL,DEPTO]{Victor N. Schvöllner}\ead{vnsi9m6@gmail.com} 
		\author[IMASL]{Adri\'an Pastine}\ead{agpastine@unsl.edu.ar}

		\begin{abstract}
			Let $S$ be a split graph with bipartition $(K,I)$ and let $\Phi(S)$ be the factor graph associated with $S$, a multigraph on $I$ whose encodes the combinatorial information about 2-switch transformations in $S$.	We study induced paths and cycles in $\Phi(S)$ and show that they impose strong structural restrictions on the neighborhoods in $S$ of the corresponding vertices. In particular, induced paths generate chains of neighborhood inclusions which force a monotone behavior of the degrees (in $S$) of their vertices along the path. As a consequence, we prove that induced cycles in $\Phi(S)$ have length $\leq 4$. Finally, we show that in any induced path only the first or the last edge can be simple, which yields an upper bound for the diameter of $\Phi(S)$ in terms of the 2-switch-degree of $S$.
%
		\end{abstract}
		
		\begin{keyword} 
			split graph, factor graph, induced paths, induced cycles, 2-switch-degree, diameter, edge-multiplicities
		\end{keyword}		
	\end{frontmatter}	

\section{Introduction}\label{sec_intro}

Let $G$ be a graph. We use $V(G)$ and $E(G)$ to refer to the vertex set and the edge set of $G$, respectively. As usual, $|G|$ denotes the order of $G$.
The degree sequence of a graph $G$ with $V(G)=[n]=\{1,\dots,n\}$ is the $n$-tuple $(d_1,\dots,d_n)$, where $d_v$ is the degree of vertex $v$ in $G$. \\

Structural transformations on graphs play a fundamental role in the study of graph classes defined by degree constraints. Among these operations, the \emph{2-switch} is one of the most important tools for navigating the family of graphs that share a common degree sequence. Given vertices $a,b,c,d$ such that $ab,cd\in E(G)$ and $ac,bd\notin E(G)$, the 2-switch replaces the edges $ab,cd$ with $ac,bd$. This operation preserves the degree sequence of the graph and has been widely used in several contexts. For closely related results, see \cite{arikati1999realization,pastine20252,schvollner20262,taylor2006contrained}. 

A classical result highlighting the importance of these transformations is the canonical decomposition theorem of Tyshkevich (see \cite{tyshkevich2000decomposition}), which describes how graphs can be decomposed into indecomposable factors with respect to certain composition operations. In particular, indecomposable graphs appear as fundamental building blocks from which large classes of graphs can be constructed. Understanding the structure of these components and the transformations acting on them is therefore a natural and important problem.

In this direction, Barrus and West introduced in \cite{barrus.west.A4} the graph $A_4(G)$ associated with a graph $G$. The vertices of $A_4(G)$ are the vertices of $G$, and two vertices are adjacent whenever they participate in the same 2-switch of $G$. One of their main results shows that $A_4(G)$ decomposes as the disjoint union of the corresponding $A_4$-graphs of the Tyshkevich factors of $G$. As a consequence, a graph $G$ is indecomposable if and only if $A_4(G)$ is connected. This connection between 2-switch transformations and structural decomposition provides a useful framework for studying graph realizations.

Split graphs constitute a particularly well-behaved class within this context. A graph $G$ is called a \emph{split graph} if its vertex set can be partitioned as $K\dot\cup I$, where $K$ is a clique and $I$
is an independent set. When such a partition is fixed, we write $(S,K,I)$ to emphasize that the split graph $S$ is considered together with a specific bipartition. Split graphs appear naturally in
several areas of graph theory and are widely studied in the literature (see \cite{splitnordhausgaddum,hammer1977split,jaume2025nullspace,whitman2020split}).

Motivated by the study of 2-switch dynamics in split graphs and inspired by Barrus and West's $A_4$, in \cite{pastine2025simple} the authors introduced the \emph{factor graph} of a split graph $(S,K,I)$, denoted by $\Phi(S)$, or simply by $\Phi$, if $S$ is clear form the context. This is a loopless multigraph with $V(\Phi)=I$, such that 
there is an edge joining $u$ and $v$ in $\Phi$ for each 2-switch acting on $u$ and $v$. Thus, $E(\Phi)$ is a multiset and each edge of $\Phi$ has a multiplicity. 
In this way, $\Phi(S)$ encodes the combinatorial information associated with 2-switch transformations on $S$. In particular, the size of $\Phi(S)$ (counting multiplicities) equals the \textit{2-switch-degree} of $S$, i.e., the number of 2-switches acting on $S$ (see \cite{pastine20252}). Moreover, $\Phi(S)$ avoids certain redundancies present in the construction of $A_4(S)$, providing a more compact description of the 2-switch structure in the special case of split graphs.   

Several structural properties of $\Phi$ were established in \cite{pastine2025simple}, including relationships between the multiplicities and the neighborhoods in $S$ of the corresponding vertices. In particular, these results show that the factor graph captures delicate inclusion relations between neighborhoods in the split graph and can be used to obtain structural information about $S$, especially when $\Phi(S)$ is simple or connected.

The purpose of this article is to further investigate the structure of $\Phi$ by focusing on its induced paths and cycles. Understanding these configurations reveals strong constraints on how
neighborhoods in the split graph may be arranged. 

This article is divided into 4 sections. In \Cref{sec:ind_paths_Phi} we study induced paths in $\Phi$. We note that the presence of an induced path $P$ in $\Phi$ determines certain inclusion chains in the neighborhoods in $S$ of the vertices of $P$. Thanks to this, we also see how the degree in $S$ of the vertices of $P$ tends to increase along $P$, moving from one end to the other. These results are then applied in \Cref{sec:ind_cycles_Phi} to study the length of induced cycles in $\Phi$ (since these contain induced paths), establishing that it is at most 4. Subsequently, in \Cref{sec:upp_bound_diam_Phi}, we return to induced paths, now analyzing the multiplicity of their edges. We find that only the first or the last edge can be simple (i.e., of multiplicity 1). This result allows us to conclude the section with an interesting upper bound for the diameter of $\Phi(S)$, in terms of the 2-switch-degree of $S$. Finally, in \Cref{sec_conclusion}, we summarize the main results of the paper and discuss possible directions for future research.


\section{Induced paths in $\Phi(S)$}
\label{sec:ind_paths_Phi}

From now on, we only concern about a split graph $S$ and its associated factor graph $\Phi(S)$. So, the symbol $N_i$ will always refer to the (open) neighborhood in $S$ of the vertex $i$ or $v_i$. Similarly, the symbol $d_i$ will always refer to the degree in $S$ of the vertex $i$ or $v_i$. On the other hand, we denote by $N_{\Phi}(i)$ the (open) neighborhood in $\Phi(S)$ of the vertex $i$ or $v_i$, and by $\sigma_{uv}$ the multiplicity of the edge $uv$ in $\Phi(S)$. 

In \cite{pastine2025simple} the authors show that 
\[ \sigma_{uv}=(d_u-\eta_{uv})(d_v-\eta_{uv}), \]
where $\eta_{uv}=|N_u\cap N_v|$, and subsequently derive from it the following proposition. 

\begin{proposition}[\cite{pastine2025simple}]
	\label{prop.basicas.sigma_uv}
	Let $S$ be a split graph. Then:
	\begin{enumerate}
		\item $\sigma_{uv}=0$ (i.e., $uv\notin E(\Phi)$) and $d_v \leq d_u$ if and only if $N_v \subset N_u$; 
		
		\item $\sigma_{uv}=0$ and $d_u =d_v$ if and only if $N_u = N_v$;
		
		\item if $N_u = N_v $, then $N_{\Phi}(u)=N_{\Phi}(v)$;
		
		\item if $\sigma_{uv}=1$, then $d_u =d_v$ and $|N_u -N_v |=|N_v -N_u |=1$.
		
		
		
		
	\end{enumerate}
\end{proposition}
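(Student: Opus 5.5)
Everything follows from the formula $\sigma_{uv}=(d_u-\eta_{uv})(d_v-\eta_{uv})$. Since $N_u,N_v\subseteq K$ are finite and $\eta_{uv}=|N_u\cap N_v|$, we have $d_u-\eta_{uv}=|N_u-N_v|$ and $d_v-\eta_{uv}=|N_v-N_u|$. Hence
\[ \sigma_{uv}=|N_u-N_v|\cdot|N_v-N_u|, \]
and each item reduces to elementary statements about these two nonnegative integers.

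For item 1, $\sigma_{uv}=0$ means one of the factors vanishes, i.e., $N_u\subseteq N_v$ or $N_v\subseteq N_u$. If moreover $d_v\le d_u$, then the first alternative forces $N_u=N_v$, so in either case $N_v\subseteq N_u$. Conversely, $N_v\subseteq N_u$ gives $|N_v-N_u|=0$, hence $\sigma_{uv}=0$, and also $d_v\le d_u$. Item 2 follows by applying item 1 in both directions, since $d_u=d_v$ gives both $d_v\le d_u$ and $d_u\le d_v$. Conversely, $N_u=N_v$ yields $\sigma_{uv}=0$ and equal degrees.

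For item 3, take $w\in I-\{u,v\}$. Then $\sigma_{uw}=|N_u-N_w|\,|N_w-N_u|$, which equals $\sigma_{vw}$ when $N_u=N_v$. So $w$ is adjacent to $u$ in $\Phi$ if and only if it is adjacent to $v$. Also $uv\notin E(\Phi)$ by item 2, so neither $u$ nor $v$ lies in the other's neighborhood. Hence $N_\Phi(u)=N_\Phi(v)$.

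For item 4, a product of two nonnegative integers equals $1$ only if both factors equal $1$. So $|N_u-N_v|=|N_v-N_u|=1$, and $d_u=\eta_{uv}+1=d_v$.

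There is no real obstacle here. The only point requiring care is the rewriting of $d_u-\eta_{uv}$ as a set difference. This is what makes both factors nonnegative integers and turns the product into the two clean statements used throughout: the product vanishes exactly when one neighborhood contains the other, and it equals $1$ exactly when each neighborhood has exactly one vertex outside the other.
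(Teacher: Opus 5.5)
Your proof is correct and takes the same route the paper indicates, deriving all four items from $\sigma_{uv}=(d_u-\eta_{uv})(d_v-\eta_{uv})$. Rewriting the factors as $|N_u-N_v|$ and $|N_v-N_u|$, and reading $\subset$ as $\subseteq$ as the paper does, makes each item immediate.
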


To streamline the presentation, we omit explicit references to items (1) and (2) of \Cref{prop.basicas.sigma_uv}, as they are ubiquitous in all the proofs of this article. So, we will limit our citations to items (3) and (4).

\begin{lemma}
	\label{lema.camino.inducido.en.Phi}
	Let $(S,K,I)$ be a split graph and $P=v_1\ldots v_n$ an induced path in $\Phi(S)$, with $n\geq 2$. If $d_1=\max\{d_i:i\in[n]\}$, then for each $i\in[n]$ we have
	\begin{equation*}
		d_i\geq d_j \ \text{and} \ N_i\supseteq N_j \ \text{for all} \ j\geq i+2.
	\end{equation*}
\end{lemma}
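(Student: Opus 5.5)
The proof is an induction on $i$. The key observation is that non-adjacency in $\Phi(S)$ forces the two $S$-neighborhoods to be comparable. Indeed, since $\sigma_{uv}=(d_u-\eta_{uv})(d_v-\eta_{uv})$, we have $\sigma_{uv}=0$ exactly when $d_u=\eta_{uv}$ or $d_v=\eta_{uv}$, that is, when $N_u\subseteq N_v$ or $N_v\subseteq N_u$. Because $P$ is induced, $v_i$ and $v_j$ are non-adjacent in $\Phi$ whenever $j\geq i+2$. So for every such pair, either $N_j\subseteq N_i$ or $N_i\subsetneq N_j$. The whole task is to exclude the second alternative. Once $N_i\supseteq N_j$ is known, $d_i\geq d_j$ follows immediately.

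\emph{Base case $i=1$.} For $j\geq 3$, the vertices $v_1$ and $v_j$ are non-adjacent in $\Phi$, and $d_j\leq d_1$ by the maximality hypothesis. Item (1) of \Cref{prop.basicas.sigma_uv} then gives $N_j\subseteq N_1$.

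\emph{Inductive step.} Let $i\geq 2$ and assume the statement holds for $i-1$. Take $j\geq i+2$ and suppose, for contradiction, that $N_i\subsetneq N_j$. Since $j\geq (i-1)+2$, the inductive hypothesis gives $N_j\subseteq N_{i-1}$. Combining, $N_i\subsetneq N_j\subseteq N_{i-1}$, so $N_i\subseteq N_{i-1}$. This inclusion means $d_i=\eta_{i-1,i}$, hence $\sigma_{v_{i-1}v_i}=0$. That contradicts the fact that $v_{i-1}v_i$ is an edge of $P$. Therefore $N_j\subseteq N_i$, and so $d_j\leq d_i$.

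For indices $i\geq n-1$ there is no $j\geq i+2$ in $[n]$, so the statement is vacuous there. The only point needing care is the comparability claim at the start: the factorized formula for $\sigma_{uv}$ gives it directly, so there is no genuine obstacle. The essential idea is that the inductive step uses the path edge $v_{i-1}v_i$ to block the reverse inclusion.
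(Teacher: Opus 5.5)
Your proof is correct and matches the paper's argument. Both induct on $i$, get the base case from the maximality of $d_1$ via item (1), and rule out the reverse inclusion in the inductive step because it would force $N_i\subseteq N_{i-1}$ and hence $\sigma_{i-1,i}=0$. The only cosmetic difference is that you state the dichotomy directly in terms of neighborhoods, whereas the paper supposes $d_i<d_k$ and derives the chain $N_i\subset N_k\subset N_{i-1}$ from that.
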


\begin{proof}
	We proceed by induction on $i$. Since $\sigma_{1j}=0$ and $d_1 \geq d_j$ for all $j\geq 3$, we have $N_1\supseteq N_j$, for all $j\geq 3$. For $i\geq 2$, we assume that $d_{i-1}\geq d_j$ and $N_{i-1}\supseteq N_j$, for all $j\geq i+1$. If for some $k\geq i+2$ we had $d_i<d_k$, then we would have $N_i\subset N_k\subset N_{i-1}$, where the last containment follows from the inductive hypothesis. This would imply that $\sigma_{i-1,i}=0$, which contradicts that $P$ is a path.
\end{proof}

\begin{lemma}
	\label{no.existe.Phi=P_5.tal.que...}
	There is no split graph $S$ such that $\Phi(S)$ contains an induced path $v_5 v_4 v_1 v_2 v_3$ where $\max\{ d_i:i\in[5]\}=d_1$.
\end{lemma}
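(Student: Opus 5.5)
The plan is to work only with the inclusions that \Cref{prop.basicas.sigma_uv} forces between non-adjacent vertices of the induced path $v_5v_4v_1v_2v_3$, and to show they eventually give $N_4\subseteq N_1$ (or, symmetrically, $N_2\subseteq N_1$). That contradicts the edge $v_1v_4$ (resp.\ $v_1v_2$) of $\Phi$. The tool throughout is: if $\sigma_{uv}=0$, then $N_u$ and $N_v$ are comparable under inclusion; and if $\sigma_{uv}>0$, then neither neighborhood contains the other.

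\textbf{Step 1.} Apply \Cref{lema.camino.inducido.en.Phi} to the induced paths $v_1v_2v_3$ and $v_1v_4v_5$. In both, $d_1$ is the maximum degree and $v_1$ is the first vertex, so we get $N_3\subseteq N_1$ and $N_5\subseteq N_1$.

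\textbf{Step 2.} Since $v_2v_4\notin E(\Phi)$, the sets $N_2$ and $N_4$ are comparable. The configuration is symmetric under swapping $v_2\leftrightarrow v_4$ and $v_3\leftrightarrow v_5$, so we may assume $N_4\subseteq N_2$.

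\textbf{Step 3.} We now use non-adjacency of $v_3$ and $v_4$ in $\Phi$, so $N_3$ and $N_4$ are comparable. If $N_3\subseteq N_4$, then $N_3\subseteq N_4\subseteq N_2$. This would force $\sigma_{23}=0$, contradicting the edge $v_2v_3$. Hence $N_4\subseteq N_3$.

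\textbf{Step 4.} Combining with Step 1 gives $N_4\subseteq N_3\subseteq N_1$. This forces $\sigma_{14}=0$, contradicting the edge $v_1v_4$ of $P$.

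\textbf{Symmetric case.} If instead $N_2\subseteq N_4$, the same argument with roles exchanged runs as follows. Comparability of $N_2$ and $N_5$, together with $\sigma_{45}>0$, rules out $N_5\subseteq N_2$, so $N_2\subseteq N_5\subseteq N_1$. This contradicts $\sigma_{12}>0$.

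The only delicate point is bookkeeping: each inclusion used must come from a genuine non-edge of the induced path, and each contradiction from a genuine edge. I expect the main (mild) obstacle to be making sure that equality cases of neighborhoods cause no trouble. They do not, because \Cref{prop.basicas.sigma_uv}(1) gives $\sigma=0$ from any inclusion $N_v\subseteq N_u$ (with $d_v\le d_u$ automatic), including equality. So no use of degrees beyond Step 1 is needed.
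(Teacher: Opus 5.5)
Your proof is correct and follows the paper's argument essentially step for step. You derive $N_3,N_5\subseteq N_1$, assume without loss of generality $N_4\subseteq N_2$, force $N_4\subseteq N_3$ via the edge $v_2v_3$, and conclude $N_4\subseteq N_1$, which contradicts the edge $v_1v_4$. Step 1 can be read off directly from item (1) of the proposition rather than the lemma, and your explicit symmetric case is redundant given the WLOG, but both are harmless.
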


\begin{proof}
	Suppose such a split graph $S$ exists. Then: $N_5,N_3\subset N_1$, since $\sigma_{13}=0=\sigma_{15}$ and $d_3,d_5\leq d_1$. As $\sigma_{24}=0$, we can assume without loss of generality that $N_4\subset N_2$. Then, it must be that $N_4\subset N_3$, since otherwise $N_3\subset N_4$ would imply $\sigma_{23}=0$. But then $N_4\subset N_1$, and therefore $\sigma_{14}=0$, which is a contradiction.
\end{proof}

\begin{theorem}
	\label{caminos.inducidos.en.Phi}
	Let $S$ be a split graph and $P=v_1\ldots v_n$ an induced path in $\Phi(S)$, with $n\geq 2$. Then,
	\begin{equation}
		\max\{ d_i:i\in[n] \}\in\{d_1,d_2,d_{n-1},d_n\}.
		\label{max.d_i}
	\end{equation}
	Moreover, if $\max\{ d_i:i\in[n] \}=d_1$, then:
	\begin{enumerate}
		\item for each $i\geq 1$: $d_i\geq d_j \ \text{and} \ N_i\supseteq N_j \ \text{for all} \ j\geq i+2$;
		\item for each $i\geq 1$: $N_i\supseteq\bigcup_{j= i+2}^n N_j$; in particular, $\bigcup_{i=1}^n N_i=N_1\cup N_2$;
		\item $d_{2k}\geq d_{2k+2}$ and $d_{2k-1}\geq d_{2k+1}$ for all $k\geq 1$;
		\item $\min\{ d_i:i\in[n] \}\in\{d_{n-1},d_n\}$.
	\end{enumerate} 
\end{theorem}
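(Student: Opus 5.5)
For the first claim, (\ref{max.d_i}), I will argue by contradiction using \Cref{no.existe.Phi=P_5.tal.que...}. Suppose the maximum degree is attained at $v_m$ with $3\le m\le n-2$. Then $v_{m-2}v_{m-1}v_mv_{m+1}v_{m+2}$ is an induced subpath of $P$, because a subpath of an induced path is again induced. Relabel its vertices as $v_5v_4v_1v_2v_3$, so that the middle vertex $v_m$ becomes $v_1$. Its degree is the largest among these five vertices, since it is the maximum over all of $P$. This is exactly the configuration that \Cref{no.existe.Phi=P_5.tal.que...} forbids. Hence the maximum must be attained at $v_1$, $v_2$, $v_{n-1}$ or $v_n$. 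When $n\le 4$ every vertex is of this form, so there is nothing to prove.

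Now assume the maximum is $d_1$. Item (1) is exactly \Cref{lema.camino.inducido.en.Phi}. Item (2) follows from item (1) by taking the union over $j\ge i+2$ of the inclusions $N_i\supseteq N_j$. For the "in particular" part, take $i=1$, which gives $N_1\supseteq\bigcup_{j\ge3}N_j$, and therefore $\bigcup_{i=1}^n N_i=N_1\cup N_2$. Item (3) is the special case $j=i+2$ of item (1): for even $i=2k$ it gives $d_{2k}\ge d_{2k+2}$, and for odd $i=2k-1$ it gives $d_{2k-1}\ge d_{2k+1}$, whenever these indices lie in $[n]$.

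For item (4), I will use item (3). The odd-indexed degrees are non-increasing, so their minimum is at the last odd index. The even-indexed degrees are likewise non-increasing, so their minimum is at the last even index. The last odd index and the last even index are precisely $n-1$ and $n$ in some order. Therefore the overall minimum lies in $\{d_{n-1},d_n\}$.

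All the real content therefore sits in the earlier lemmas. The only step needing care is the reduction of (\ref{max.d_i}) to the forbidden $P_5$ configuration, in particular matching the relabeling to the labeling in \Cref{no.existe.Phi=P_5.tal.que...}. The rest is bookkeeping with indices, including the degenerate cases where $j>n$ makes a statement vacuous.
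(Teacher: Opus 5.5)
Your proof is correct and follows the paper's own argument essentially step for step: \eqref{max.d_i} is reduced to \Cref{no.existe.Phi=P_5.tal.que...} using the induced $P_5$ centred at an interior maximum, and items (1)--(3) are read off from \Cref{lema.camino.inducido.en.Phi}. The only difference is cosmetic, in item (4): the paper compares $d_j$ with $d_n$ directly through item (1) for $j\le n-2$, whereas you use the two non-increasing parity chains from item (3), which amounts to the same thing (and your version also avoids the paper's loose treatment of the equality case $d_j=d_n$ as a contradiction).
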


The notation $H\prec G$, which first appears in the following proof, means that $H$ is an induced subgraph of $G$.

\begin{proof}
	We begin by proving \eqref{max.d_i}. If $n\leq 4$, there is nothing to do. If $n\geq 5$, suppose that $\max\{ d_i:i\in[n] \}=d_j$, where $j\notin\{1,2,n-1,n\}$. Then $v_{j-2}v_{j-1}v_jv_{j+1}v_{j+2}\prec \Phi(S)$, but by \Cref{no.existe.Phi=P_5.tal.que...} we know this is impossible. 
	\begin{enumerate}[(1).]
		\item It is the content of \Cref{lema.camino.inducido.en.Phi}.
		\item It is an immediate consequence of (1).
		\item From (1) it immediately follows that $d_1\geq d_3\geq d_5\geq d_7\geq\ldots$ and that $d_2\geq d_4\geq d_6\geq d_8\geq\ldots$.
		\item If $\min\{ d_i:i\in[n] \}=d_j$ for some $j<n-1$, then $d_j\leq d_n$, which contradicts (1).
	\end{enumerate}  
\end{proof}

The following is a direct application of \Cref{caminos.inducidos.en.Phi}. Through it, we find an interesting divisibility relationship involving the multiplicity of one of the terminal edges of an induced path in $\Phi$. 

\begin{corollary}
	\label{K-d_max.divide.sigma}
	Let $(S,K,I)$ be a split graph such that $K=\bigcup_{v\in I}N_v$. If 
	$P=v_1\ldots v_n\prec\Phi(S)$ and $d_1=\max\{d_i:i\in[n],n\geq 2\}$, then we have the following.
	\begin{enumerate}
		\item $|\bigcup_{i=1}^n N_i|-d_1$ divides $\sigma_{12}$.
		
		\item If $\Phi=P$, then $|K|-d_1$ divides $\sigma_{12}$.
	\end{enumerate}
\end{corollary}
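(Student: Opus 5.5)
The plan is to compute both sides explicitly from the multiplicity formula $\sigma_{uv}=(d_u-\eta_{uv})(d_v-\eta_{uv})$. The divisor will turn out to be one of the two factors of $\sigma_{12}$. The only structural input needed is item (2) of \Cref{caminos.inducidos.en.Phi}.

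For part (1), since $P$ is an induced path in $\Phi(S)$ and $d_1$ is the maximum degree along $P$, item (2) of \Cref{caminos.inducidos.en.Phi} gives $\bigcup_{i=1}^n N_i=N_1\cup N_2$. For $n=2$ this is trivial, and for $n\geq 3$ it holds because $N_1\supseteq N_j$ for all $j\geq 3$. By inclusion–exclusion,
\[ \Bigl|\bigcup_{i=1}^n N_i\Bigr| - d_1 = |N_1\cup N_2| - d_1 = d_2-\eta_{12}. \]
This is exactly the second factor in $\sigma_{12}=(d_1-\eta_{12})(d_2-\eta_{12})$, so it divides $\sigma_{12}$. We should also check that it is a genuine (nonzero) divisor. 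Since $v_1v_2\in E(\Phi)$, we have $\sigma_{12}\geq 1$, so both factors are positive; in particular $d_2-\eta_{12}\geq 1$.

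For part (2), if $\Phi=P$ then $V(\Phi)=I=\{v_1,\dots,v_n\}$. The hypothesis $K=\bigcup_{v\in I}N_v$ therefore gives $K=\bigcup_{i=1}^n N_i$, and part (1) applies directly.

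There is no real obstacle here. The only point needing care is correctly invoking item (2) of \Cref{caminos.inducidos.en.Phi} to collapse the union to $N_1\cup N_2$, and that relies precisely on the hypothesis that $d_1$ is the maximum.
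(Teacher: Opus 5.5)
Your proposal is correct and follows the paper's proof essentially verbatim. Like the paper, you use item (2) of \Cref{caminos.inducidos.en.Phi} to collapse $\bigcup_{i=1}^n N_i$ to $N_1\cup N_2$, then recognize $|N_1\cup N_2|-d_1=d_2-\eta_{12}$ as a factor of $\sigma_{12}=(d_1-\eta_{12})(d_2-\eta_{12})$. Your positivity check on that factor and your one-line derivation of part (2) from $V(\Phi)=I$ spell out what the paper leaves implicit.
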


\begin{proof}
	Since $P$ is an induced path in $\Phi$, we have that $\bigcup_{i=1}^n N_i$ $=N_1\cup N_2$, by \Cref{caminos.inducidos.en.Phi}. Then, $|\bigcup_{i=1}^n N_i|-d_1=d_2-\eta_{12}$. We complete the proof using that $\sigma_{12}=(d_1-\eta_{12})(d_2-\eta_{12})$.
\end{proof}

Notice that from \Cref{K-d_max.divide.sigma} we immediately deduce that 
\[ \left|\bigcup_{i=1}^n N_i \right|\leq d_1+\sqrt{\sigma_{12}}. \] 


\section{Induced cycles in $\Phi(S)$}
\label{sec:ind_cycles_Phi}

Since every induced cycle in a graph (or multigraph) contains induced paths, we can apply \Cref{caminos.inducidos.en.Phi} to such paths in $\Phi$. In this way, we obtain an interesting result: $\Phi$ can only contain induced cycles of length 3 or 4. This is precisely the content of \Cref{ciclos.inducidos.en.Phi}, whose proof is essentially divided between the following two lemmas. 

\begin{lemma}
	\label{no.hay.C_5.en.Phi.con.|C|<6}
	If $S$ is a split graph and $C$ is an induced cycle in $\Phi(S)$, then $|C|\leq 5$.
\end{lemma}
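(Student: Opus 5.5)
Assume, for contradiction, that $C=v_1v_2\ldots v_nv_1$ is an induced cycle in $\Phi(S)$ with $n\geq 6$. By relabeling cyclically, we take $v_1$ to be a vertex of maximum degree $d_1=\max\{d_i:i\in[n]\}$ on $C$. The plan is to delete a vertex of $C$ adjacent to $v_1$, so that $v_1$ sits in the interior of a long induced path with its two $C$-neighbours as path-neighbours, and then invoke \Cref{no.existe.Phi=P_5.tal.que...}.

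Since $C$ is induced and $n\geq 6$, removing $v_3$ from $C$ leaves the induced path
\[
v_4v_5\ldots v_nv_1v_2 .
\]
It has $n-1\geq 5$ vertices, and $v_1$ is its second-to-last vertex. Removing $v_{n-1}$ instead gives the induced path $v_nv_1v_2\ldots v_{n-2}$, in which $v_1$ is the second vertex. Neither deletion places $v_1$ at depth $\geq 3$, so neither path yields a contradiction directly.

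Instead, I take the five consecutive vertices $v_{n-1}v_nv_1v_2v_3$ of $C$. Because $n\geq 6$, these are five distinct vertices. The only $C$-edges among them are the four consecutive ones: a chord would be needed otherwise, and the wrap-around edge $v_3v_{n-1}$ would require $n-1=4$. So they induce a path $P_5$ in $\Phi(S)$ whose middle vertex $v_1$ has maximum degree among the five. Relabeling them as $v_5v_4v_1v_2v_3$, this is exactly the configuration that \Cref{no.existe.Phi=P_5.tal.que...} forbids, which gives the contradiction. Hence $|C|\leq 5$.

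This argument is essentially immediate. The only points to check carefully are that the five vertices are distinct, that no extra edge of $\Phi$ joins $v_{n-1}$ and $v_3$ (guaranteed by $n\geq 6$ together with inducedness), and that the maximum over the whole cycle is in particular the maximum over the five chosen vertices.
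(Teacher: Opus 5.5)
Your proof is correct, but it takes a different route from the paper's.

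\textbf{The paper's route.} It applies part (1) of \Cref{caminos.inducidos.en.Phi} to the two induced paths $v_1v_2\ldots v_{n-1}$ and $v_1v_nv_{n-1}\ldots v_3$, both starting at the maximum-degree vertex. This gives $N_3\supseteq N_{n-1}$ and $N_{n-1}\supseteq N_3$. Then \Cref{prop.basicas.sigma_uv}(3) turns $N_3=N_{n-1}$ into $N_{\Phi}(3)=N_{\Phi}(n-1)$, which forces the forbidden chord $v_2v_{n-1}$.

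\textbf{Your route.} You observe that for $n\geq 6$ every vertex of $C$, in particular $v_1$, is the center of an induced $P_5$ made of five consecutive cycle vertices. You then invoke \Cref{no.existe.Phi=P_5.tal.que...} directly; this is the same ingredient the paper uses to prove \eqref{max.d_i}. Appealing to the lemma itself rather than to \eqref{max.d_i} is the right choice. \eqref{max.d_i} only constrains the maximum value, so it would not exclude the case where the center's degree ties with that of another vertex of the $P_5$.

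\textbf{What each approach buys.} Your argument avoids both the inclusion chains and the passage from neighborhoods in $S$ to neighborhoods in $\Phi$. It also shows why $5$ is the threshold: for $n=5$ the five consecutive vertices close up into $C$ itself, which is why \Cref{no.hay.C_5.en.Phi} needs a separate argument. The paper's route, in exchange, yields the additional structural fact $N_3=N_{n-1}$ and showcases the inclusion-chain machinery of \Cref{sec:ind_paths_Phi}.

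Your exploratory paragraph about deleting $v_3$ or $v_{n-1}$ plays no role in the proof and can be dropped.
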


\begin{proof}
	Assume that $|C|\geq 6$. If $C=v_1\ldots v_nv_1$, we can assume without loss of generality that $d_1=\max\{d_i:i\in[n]\}$. Then, $P=v_1\ldots v_{n-1}$ and $P'=v_1v_n v_{n-1}\ldots v_3$ are induced paths of $\Phi$. Since $n\geq 6$, we obtain that $N_3\supseteq N_{n-1}$ and $N_{n-1}\supseteq N_3$, applying \Cref{caminos.inducidos.en.Phi} to $P$ and $P'$ respectively. Consequently, $N_3=N_{n-1}$, and therefore $N_{\Phi}(3)=N_{\Phi}(n-1)$, by \Cref{prop.basicas.sigma_uv}(3). This means that $v_2v_{n-1} \in E(\Phi)$, which is a contradiction.
\end{proof}

\begin{lemma}
	\label{no.hay.C_5.en.Phi}
	If $S$ is a split graph and $C$ is an induced cycle in $\Phi(S)$, then $|C|\neq 5$.
\end{lemma}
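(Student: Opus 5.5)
Label the cycle as $C=v_1v_2v_3v_4v_5v_1$ and, after rotating, assume $d_1=\max\{d_i:i\in[5]\}$. The plan is to apply \Cref{caminos.inducidos.en.Phi} to two induced paths that start at $v_1$, obtaining neighborhood inclusions. Then comparability of the non-adjacent pair $v_2,v_5$ will force one of the cycle edges to disappear.

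\emph{First path.} Since $C$ is an induced cycle, $P=v_1v_2v_3v_4$ is an induced path in $\Phi(S)$, and its first vertex has maximum degree. Item (1) of \Cref{caminos.inducidos.en.Phi} with $i=2$ and $j=4$ gives $N_2\supseteq N_4$.

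\emph{Second path.} Likewise $P'=v_1v_5v_4v_3$ is an induced path in $\Phi(S)$ starting at the vertex of maximum degree. The same item, applied to its second vertex $v_5$ and fourth vertex $v_3$, gives $N_5\supseteq N_3$.

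\emph{The pair $v_2,v_5$.} These vertices are non-adjacent in $C$, so $\sigma_{25}=0$, and by \Cref{prop.basicas.sigma_uv} their neighborhoods are comparable.
\begin{itemize}
\item If $N_5\subseteq N_2$, then $N_3\subseteq N_5\subseteq N_2$. Since $N_3\subseteq N_2$ implies $\sigma_{23}=0$, the edge $v_2v_3$ is missing, a contradiction.
\item If $N_2\subseteq N_5$, then $N_4\subseteq N_2\subseteq N_5$, which gives $\sigma_{45}=0$, again a contradiction.
\end{itemize}
Hence no induced $5$-cycle exists.

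The only delicate point is checking that the containments supplied by the theorem are exactly the right ones, $N_2\supseteq N_4$ and $N_5\supseteq N_3$. This needs both paths to be induced in $\Phi$, which holds because they are obtained from the induced cycle $C$ by deleting one vertex. It also needs that an inclusion $N_v\subseteq N_u$ always yields $\sigma_{uv}=0$, which is \Cref{prop.basicas.sigma_uv}(1) (an inclusion forces $d_v\le d_u$). After that, the case split on $v_2,v_5$ is immediate.
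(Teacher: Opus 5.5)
Your proof is correct and matches the paper's argument step for step. You use the same two induced paths $v_1v_2v_3v_4$ and $v_1v_5v_4v_3$ to get $N_4\subseteq N_2$ and $N_3\subseteq N_5$, and then the comparability of $N_2$ and $N_5$ (from $\sigma_{25}=0$) forces $\sigma_{23}=0$ or $\sigma_{45}=0$.
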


\begin{proof}
	Suppose that $\Phi(S)$ has an induced cycle $C$ of length 5. If $C=v_1\ldots v_5v_1$, we can assume without loss of generality that $\max\{d_i\in[5]\}=d_1$. Since $v_1v_2v_3v_4$ and $v_1v_5v_4v_3$ are induced paths in $S$, we can use \Cref{caminos.inducidos.en.Phi} to infer that $N_4\subset N_2$ and that $N_3\subset N_5$. Since $\sigma_{25}=0$, we have $N_2\subset N_5$ or $N_5\subset N_2$. If $N_2\subset N_5$, then $N_4\subset N_2$, and hence $\sigma_{45}=0$. If $N_5\subset N_2$, then $N_3\subset N_5$, which implies $\sigma_{23}=0$. Both are contradictions.
\end{proof}

\begin{theorem}
	\label{ciclos.inducidos.en.Phi}
	If $S$ is a split graph and $C$ is an induced cycle in $\Phi(S)$, then $|C|\leq 4$.
\end{theorem}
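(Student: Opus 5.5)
The theorem follows at once from the two preceding lemmas, so the plan is simply to combine them. Let $C$ be an induced cycle in $\Phi(S)$. By \Cref{no.hay.C_5.en.Phi.con.|C|<6} we have $|C|\leq 5$. By \Cref{no.hay.C_5.en.Phi} we have $|C|\neq 5$. Together these give $|C|\leq 4$. The only real content lies in those two lemmas, and both are already established.

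For completeness, I would add a remark on what an ``induced cycle'' means in the multigraph $\Phi$. We regard $C$ as a cycle in the underlying simple graph of $\Phi$: its consecutive vertices are joined by edges of multiplicity at least one, and its non-consecutive vertices have $\sigma=0$. This is the reading used implicitly in both lemmas, where non-adjacency is translated into $\sigma_{uv}=0$ and hence, via \Cref{prop.basicas.sigma_uv}, into comparability of neighborhoods.

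Under this reading there are no length-$2$ cycles to worry about, since multiple edges are not counted as cycles. So the induced cycles that remain possible are exactly those of length $3$ and $4$. This matches the paper's remark that $\Phi$ can only contain induced cycles of length $3$ or $4$.

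I expect no real obstacle. The only point requiring care is that both lemmas apply to every induced cycle, with no extra hypothesis such as the position of the maximum degree. The lemmas secure this themselves, by rotating or relabelling the cycle so that $d_1$ is maximal, so the combination is immediate.
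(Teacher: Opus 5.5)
Your proposal is correct and matches the paper's proof: the theorem is obtained by combining the lemma giving $|C|\leq 5$ with the lemma excluding $|C|=5$. Your remark about reading induced cycles in the underlying simple graph of $\Phi$ agrees with how those lemmas are used, but the bound does not depend on it.
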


\begin{proof}
	It follows directly from \Cref{no.hay.C_5.en.Phi.con.|C|<6,no.hay.C_5.en.Phi}.
\end{proof}


\section{An upper bound for the diameter of $\Phi(S)$}
\label{sec:upp_bound_diam_Phi}

As announced in \Cref{sec_intro}, we will now see an interesting result about the simple edges of a $P_4\prec\Phi$. 

\begin{theorem}
	\label{prohibido.P_4.sigma_23=1}
	If $S$ is a split graph, then $\Phi(S)$ cannot contain an induced path $v_1 v_2 v_3 v_4$ where $\sigma_{23}=1$.
\end{theorem}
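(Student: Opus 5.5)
The plan is a direct case analysis on the neighborhoods in $S$, using only items (1)--(4) of \Cref{prop.basicas.sigma_uv}. Suppose $v_1v_2v_3v_4\prec\Phi(S)$ with $\sigma_{23}=1$.

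\emph{Setting up the notation.} By \Cref{prop.basicas.sigma_uv}(4) we have $d_2=d_3$, and we can write
\[ N_2=A\cup\{x\}, \qquad N_3=A\cup\{y\}, \]
where $A=N_2\cap N_3$ and $x\neq y$. The relevant zero and nonzero multiplicities translate as follows.
\begin{itemize}
\item[] The non-edges $v_1v_3$, $v_2v_4$ and $v_1v_4$ make the pairs $(N_1,N_3)$, $(N_2,N_4)$ and $(N_1,N_4)$ comparable under inclusion.
\item[] The edges $v_1v_2$ and $v_3v_4$ make the pairs $(N_1,N_2)$ and $(N_3,N_4)$ incomparable.
\item[] Equalities can be excluded. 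For instance, $N_1=N_3$ would give $N_\Phi(1)=N_\Phi(3)\ni v_4$ by \Cref{prop.basicas.sigma_uv}(3), contradicting $\sigma_{14}=0$. Likewise $N_2\neq N_4$, so every comparability above is strict.
\end{itemize}

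\emph{Key observations.}
\begin{itemize}
\item[] If $N_1\subset N_3$, then $y\in N_1$, since otherwise $N_1\subseteq A\subset N_2$.
\item[] If $N_3\subset N_1$, then $y\in N_1$ trivially, and $x\notin N_1$, since otherwise $N_2\subseteq N_1$.
\item[] Symmetrically, $N_4\subset N_2$ forces $x\in N_4$ and $y\notin N_4$ (because $N_4\subseteq A\cup\{x\}$).
\item[] Symmetrically, $N_2\subset N_4$ forces $x\in N_4$ and $y\notin N_4$ (since $y\in N_4$ would give $N_3\subseteq N_4$).
\end{itemize}
In every case, therefore, $y\in N_1\setminus N_4$ and $x\in N_4$. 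Also $N_1\subset N_3$ gives $x\notin N_1$, as $N_3=A\cup\{y\}$ does not contain $x$.

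\emph{Concluding.} In all four combinations we get $x\in N_4\setminus N_1$ and $y\in N_1\setminus N_4$. Hence $N_1$ and $N_4$ are incomparable, so $\sigma_{14}\neq 0$, which contradicts the inducedness of the path. The reversal symmetry $v_i\mapsto v_{5-i}$, $x\leftrightarrow y$ can halve the bookkeeping, but the uniform argument above already covers all cases.

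The main obstacle is making sure the non-strict cases (equal neighborhoods) are properly discarded. This is exactly where item (3) of \Cref{prop.basicas.sigma_uv} is needed, together with the induced-path hypothesis on $v_1v_4$ (respectively on $v_1v_3$ and $v_2v_4$).
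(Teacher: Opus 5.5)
Your proof is correct, and it follows a genuinely different route from the paper's.

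\textbf{The paper's route.} It splits according to how $d_1$ and $d_4$ compare with $d_2=d_3$, and uses item (1) of the basic proposition to turn each degree comparison into a definite inclusion direction. It then handles $d_1\le d_2\ge d_4$, $d_1\ge d_2\le d_4$ and $d_1\le d_2\le d_4$ in three separate lemmas. The first two use only $d_2=d_3$. The third really needs $\sigma_{23}=1$, through the auxiliary proposition showing that $N_1-N_2=N_3-N_2$ is a singleton and $N_3\subseteq N_1\cup N_2$. The fourth configuration, $d_1\ge d_2\ge d_4$, is covered only implicitly, by reversing the path.

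\textbf{Your route.} You fix the two private vertices $x\in N_2-N_3$ and $y\in N_3-N_2$. You show that each comparable pair, $(N_1,N_3)$ and $(N_2,N_4)$, pins down the membership of $x$ and $y$ whichever inclusion holds. Hence always $x\in N_4-N_1$ and $y\in N_1-N_4$.

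\textbf{Comparison.} Your argument avoids degrees and case bookkeeping entirely, needs no symmetry, and shows exactly where the singleton differences from item (4) are used. The paper's route gives by-products in exchange: the lemmas that need only $d_2=d_3$, and the formula $\sigma_{12}=d_2-d_1+1$.

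\textbf{A correction to your closing remark.} Ruling out equal neighborhoods via item (3) is never actually used. Each of your four observations holds verbatim with $\subseteq$ in place of strict inclusion. For instance, $N_1\subseteq N_3$ together with $y\notin N_1$ still gives $N_1\subseteq A\subseteq N_2$. So that step, and the ``main obstacle'' it is meant to address, can simply be dropped.
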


The proof of \Cref{prohibido.P_4.sigma_23=1} requires several steps. Therefore, we postpone it momentarily, as it will be obtained more easily by first proving some preliminary lemmas.  

\begin{lemma}
	\label{prohibido.P_4.sigma_23=1.d_1<d_2>d_4}
	If $S$ is a split graph, then $\Phi(S)$ cannot contain an induced path $v_1 v_2 v_3 v_4$ where $\sigma_{23}=1$ and $d_1\leq d_2\geq d_4$.
\end{lemma}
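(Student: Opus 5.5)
The plan is to use \Cref{prop.basicas.sigma_uv}(4) to write the two middle neighborhoods explicitly. Then the three non-edges $v_1v_3$, $v_2v_4$, $v_1v_4$ of the induced path, which by items (1) and (2) force nested neighborhoods, should lead to a contradiction. Since $\sigma_{23}=1$, item (4) gives $d_2=d_3$ and $|N_2-N_3|=|N_3-N_2|=1$. So we can write $N_2=A\cup\{x\}$ and $N_3=A\cup\{y\}$, where $A=N_2\cap N_3$ and $x\neq y$, $x\notin N_3$, $y\notin N_2$.

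First we pin down where $N_1$ and $N_4$ sit. Since $\sigma_{13}=0$ and $d_1\le d_2=d_3$, we get $N_1\subseteq N_3$. Equality is impossible. Indeed, if $N_1=N_3$ then \Cref{prop.basicas.sigma_uv}(3) gives $N_\Phi(1)=N_\Phi(3)\ni v_4$, contradicting $\sigma_{14}=0$. Hence $N_1\subsetneq N_3$. Symmetrically, $\sigma_{24}=0$ and $d_4\le d_2$ give $N_4\subseteq N_2$. If $N_4=N_2$, item (3) would give $v_1\in N_\Phi(4)$, again a contradiction, so $N_4\subsetneq N_2$.

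Next we locate the special elements $x$ and $y$. Since $v_1v_2\in E(\Phi)$, we have $N_1\not\subseteq N_2$, for otherwise $\sigma_{12}=0$. But $N_1\subseteq A\cup\{y\}$, so necessarily $y\in N_1$. In the same way, $v_3v_4\in E(\Phi)$ forces $N_4\not\subseteq N_3$. Together with $N_4\subseteq A\cup\{x\}$, this gives $x\in N_4$.

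Finally we use $\sigma_{14}=0$, which makes $N_1$ and $N_4$ comparable. If $N_1\subseteq N_4$, then $y\in N_4\subseteq N_2$, contradicting $y\notin N_2$. If $N_4\subseteq N_1$, then $x\in N_1\subseteq N_3$, contradicting $x\notin N_3$. Either way we reach a contradiction.

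The only delicate step is ruling out the equalities $N_1=N_3$ and $N_4=N_2$, which is handled by item (3). After that, the argument is a direct chase of the elements $x$ and $y$.
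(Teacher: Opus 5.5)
Your proof is correct and follows the paper's argument. From $d_2=d_3$ you get $N_1\subseteq N_3$ and $N_4\subseteq N_2$, and the comparability of $N_1,N_4$ (from $\sigma_{14}=0$) then forces either $N_4\subseteq N_1\subseteq N_3$ (so $\sigma_{34}=0$) or $N_1\subseteq N_4\subseteq N_2$ (so $\sigma_{12}=0$). Your tracking of $x,y$ just restates this transitivity, and your strictness step via item (3) is never used, since only $d_2=d_3$ from item (4) is needed.
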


\begin{proof}
	Suppose there exists a split graph $S$ such that $v_1v_2v_3v_4\prec\Phi=\Phi(S), \sigma_{23}=1$ and $d_1\leq d_2\geq d_4$. Since $\sigma_{23}=1$, by \Cref{prop.basicas.sigma_uv}(4) we have that $d_2=d_3$, which implies that $d_3\geq d_4$ and $N_1\subset N_3$. Since $\sigma_{14}=0$, we can assume without loss of generality that $d_4\leq d_1$ and that $N_4\subset N_1\subset N_3$. But this contradicts that $\sigma_{34}\neq 0$. 
\end{proof}

\begin{lemma}
	\label{prohibido.P_4.sigma_23=1.d_1>d_2<d_4}
	If $S$ is a split graph, then $\Phi(S)$ cannot contain an induced path $v_1 v_2 v_3 v_4$ where $\sigma_{23}=1$ and $d_1\geq d_2\leq d_4$.
\end{lemma}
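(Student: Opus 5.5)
The plan is to mirror the proof of \Cref{prohibido.P_4.sigma_23=1.d_1<d_2>d_4}. First I derive one neighborhood inclusion for each of the non-edges $v_1v_3$ and $v_2v_4$. Then I show that either way of resolving the non-edge $v_1v_4$ forces one of the path edges to vanish. Throughout I use items (1)--(2) of \Cref{prop.basicas.sigma_uv} in the form: if $\sigma_{uv}=0$ and $d_v\leq d_u$, then $N_v\subseteq N_u$.

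\emph{Step 1.} Suppose $v_1v_2v_3v_4\prec\Phi(S)$ with $\sigma_{23}=1$ and $d_1\geq d_2\leq d_4$. By \Cref{prop.basicas.sigma_uv}(4), $d_2=d_3$.

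\emph{Step 2.} Since $d_3=d_2\leq d_1$ and $\sigma_{13}=0$, we get $N_3\subseteq N_1$. Since $d_2\leq d_4$ and $\sigma_{24}=0$, we get $N_2\subseteq N_4$.

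\emph{Step 3.} Since $\sigma_{14}=0$ and the degrees $d_1,d_4$ are comparable, one of $N_1\subseteq N_4$ or $N_4\subseteq N_1$ holds.
\begin{itemize}
\item If $N_1\subseteq N_4$, then $N_3\subseteq N_1\subseteq N_4$. This gives $\sigma_{34}=0$, contradicting $v_3v_4\in E(\Phi)$.
\item If $N_4\subseteq N_1$, then $N_2\subseteq N_4\subseteq N_1$. This gives $\sigma_{12}=0$, contradicting $v_1v_2\in E(\Phi)$.
\end{itemize}
Both cases are impossible, which proves the lemma.

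I expect no real obstacle here; the only point needing care is the inclusion-to-multiplicity step. An inclusion $N_v\subseteq N_u$ (possibly with equality) gives $\eta_{uv}=d_v$, so the formula $\sigma_{uv}=(d_u-\eta_{uv})(d_v-\eta_{uv})$ yields $\sigma_{uv}=0$. This is the same reasoning the paper uses implicitly in the preceding lemmas.
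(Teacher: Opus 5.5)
Your proof is correct and is essentially the paper's argument. The paper proves this lemma only by saying it is analogous to \Cref{prohibido.P_4.sigma_23=1.d_1<d_2>d_4}, which follows your pattern: one inclusion from each of the non-edges $v_1v_3$ and $v_2v_4$, then resolving $\sigma_{14}=0$ into an inclusion chain that forces a path edge to vanish. The only cosmetic difference is that you handle both resolutions of $v_1v_4$ explicitly, whereas that template settles one of them by the reversal $v_1v_2v_3v_4\mapsto v_4v_3v_2v_1$, which preserves your hypotheses because $d_2=d_3$.
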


\begin{proof}
	The proof is entirely analogous to that of \Cref{prohibido.P_4.sigma_23=1.d_1<d_2>d_4}.
\end{proof}

The following proposition allows, among other things, to obtain a quick proof of \Cref{prohibido.P_4.sigma_23=1.d_1<d_2<d_4}. 

\begin{proposition}
	\label{P_3.d_1<d_2.sigma_23=1.implica...}
	Let $S$ be a split graph. If $\Phi(S)$ contains an induced path $v_1v_2v_3$ such that $d_1\leq d_2$ and $\sigma_{23}=1$, then:
	\begin{enumerate}
		\item $N_1-N_2=\{x\}=N_3-N_2$, for some vertex $x$ in $S$.
		\item $N_3=\{x\}\dot{\cup}(N_2\cap N_3)\subsetneq N_1\cup N_2$.
		\item $\sigma_{12}=d_2-d_1+1$.
	\end{enumerate} 
\end{proposition}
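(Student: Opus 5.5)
We use only \Cref{prop.basicas.sigma_uv} and the formula $\sigma_{uv}=(d_u-\eta_{uv})(d_v-\eta_{uv})$. First, by \Cref{prop.basicas.sigma_uv}(4), $\sigma_{23}=1$ gives $d_2=d_3$, $N_2-N_3=\{y\}$ and $N_3-N_2=\{x\}$ for some vertices $x\neq y$ of $K$. Since $v_1v_3\notin E(\Phi)$ and $d_1\leq d_2=d_3$, item (1) of \Cref{prop.basicas.sigma_uv} gives $N_1\subseteq N_3$. Moreover $N_1\neq N_3$: otherwise \Cref{prop.basicas.sigma_uv}(3) would give $N_\Phi(1)=N_\Phi(3)$, and $v_2\in N_\Phi(1)$ would force $v_2v_3$ and $v_1v_2$ to look alike, while $v_3\in N_\Phi(2)$ and $v_1v_3\notin E(\Phi)$ gives a contradiction.

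Next we show $x\in N_1$. Suppose not. Then $N_1\subseteq N_3-\{x\}=N_2\cap N_3\subseteq N_2$. Since $d_1\le d_2$, item (1) gives $\sigma_{12}=0$, contradicting $v_1v_2\in E(\Phi)$. Hence $x\in N_1$. Because $N_1\subseteq N_3=\{x\}\dot\cup(N_2\cap N_3)$, we get $N_1-N_2=\{x\}=N_3-N_2$, which is item (1).

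For item (2), the decomposition $N_3=\{x\}\dot\cup(N_2\cap N_3)$ is immediate. From item (1) we have $x\in N_1$, so $N_3\subseteq N_1\cup N_2$. The inclusion is strict because $y\in N_2-N_3$.

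For item (3), $N_1-N_2=\{x\}$ gives $\eta_{12}=d_1-1$. Therefore
\[
\sigma_{12}=(d_1-\eta_{12})(d_2-\eta_{12})=1\cdot(d_2-d_1+1)=d_2-d_1+1 .
\]

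The only real obstacle is the step $x\in N_1$. It amounts to noticing that dropping $x$ would put $N_1$ inside $N_2$, which kills the edge $v_1v_2$. The remaining steps are bookkeeping.
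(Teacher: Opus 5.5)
Your argument for items (1)--(3) is correct and is essentially the paper's route. From $N_1\subseteq N_3$ you confine $N_1-N_2$ inside $N_3-N_2=\{x\}$. Your check that $x\in N_1$ (otherwise $N_1\subseteq N_2$ and the edge $v_1v_2$ disappears) makes explicit the nonemptiness of $N_1-N_2$, which the paper's proof uses only implicitly when it rules out $|N_1-N_2|>1$ and concludes $|N_1-N_2|=1$.

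One sentence should be deleted, though. The claim $N_1\neq N_3$ is false, and its justification does not work. The equality $N_\Phi(1)=N_\Phi(3)$ is perfectly consistent with $v_1$ and $v_3$ being non-adjacent common neighbours of $v_2$, so no contradiction arises. Concretely, the paper's own example $S_2$ has $N_1=N_3=\{x_1\}$ and $N_2=\{x_2\}$. There $\Phi$ is the induced path $v_1v_2v_3$ with $\sigma_{12}=\sigma_{23}=1$ and $d_1=d_2$, so all hypotheses hold while $N_1=N_3$. Since you never use this claim afterwards, removing it leaves a complete proof.
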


\begin{proof}
	\begin{enumerate}[(1).]
		\item Since $\sigma_{23}=1$, we have $d_2=d_3$ and $|N_2-N_3|=1=|N_3-N_2|$, by \Cref{prop.basicas.sigma_uv}(4). Moreover, $N_1\subset N_3$, since $\sigma_{13}=0$ and $d_1\leq d_3$. If $|N_1-N_2|>1$, it is clear that we would also have $|N_3-N_2|>1$, since $N_1\subset N_3$. Therefore, it must be $N_1-N_2=\{x\}=N_3-N_2$, for some vertex $x$ in $S$.
		
		\item Since $N_3-N_2=N_3-N_2\cap N_3$, it follows that $N_3=\{x\}\dot{\cup}(N_2\cap N_3)$. In particular, $N_3\subset N_1\cup N_2$, and the inclusion is strict because $N_2-N_3\neq\varnothing$.
		
		\item Since $\sigma_{12}=(d_1-\eta_{12})(d_2-\eta_{12})$ and $d_1-\eta_{12}=|N_1-N_2|=1$, we easily obtain that $\sigma_{12}=d_2-d_1+1$.
	\end{enumerate}
\end{proof}

\begin{lemma}
	\label{prohibido.P_4.sigma_23=1.d_1<d_2<d_4}
	If $S$ is a split graph, then $\Phi(S)$ cannot contain an induced path $v_1 v_2 v_3 v_4$ where $\sigma_{23}=1$ and $d_1\leq d_2\leq d_4$.
\end{lemma}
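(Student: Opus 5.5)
The plan is to reduce everything to neighborhood inclusions and show that $N_3\subseteq N_4$, which forces $\sigma_{34}=0$ and contradicts $v_3v_4\in E(\Phi)$.

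Suppose, for a contradiction, that $v_1v_2v_3v_4\prec\Phi(S)$ with $\sigma_{23}=1$ and $d_1\leq d_2\leq d_4$.

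First, $v_1v_2v_3$ is itself an induced path with $d_1\leq d_2$ and $\sigma_{23}=1$. So \Cref{P_3.d_1<d_2.sigma_23=1.implica...} applies. By its items (1) and (2), there is a vertex $x$ with $N_1-N_2=\{x\}=N_3-N_2$, and
\[ N_3=\{x\}\dot{\cup}(N_2\cap N_3). \]
In particular $x\in N_1$.

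Next I use the non-edges of the path containing $v_4$, via item (1) of \Cref{prop.basicas.sigma_uv}.
\begin{itemize}
\item Since $\sigma_{24}=0$ and $d_2\leq d_4$, we get $N_2\subseteq N_4$.
\item Since $\sigma_{14}=0$ and $d_1\leq d_2\leq d_4$, we get $N_1\subseteq N_4$, and hence $x\in N_4$.
\end{itemize}

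Combining these, $N_2\cap N_3\subseteq N_2\subseteq N_4$ and $x\in N_4$. By the decomposition of $N_3$ above, $N_3\subseteq N_4$.

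Finally, item (4) of \Cref{prop.basicas.sigma_uv} gives $d_3=d_2\leq d_4$, so $d_3\leq d_4$. With $N_3\subseteq N_4$, item (1) of \Cref{prop.basicas.sigma_uv} yields $\sigma_{34}=0$. This contradicts $v_3v_4$ being an edge of the path.

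The step that needs the most care is confirming that \Cref{P_3.d_1<d_2.sigma_23=1.implica...} pins $N_3$ down to just $\{x\}$ together with part of $N_2$. Once that is in hand, the rest is routine bookkeeping with item (1) of \Cref{prop.basicas.sigma_uv}.
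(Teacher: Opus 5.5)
Your proof is correct and is essentially the paper's argument. Both combine $N_1\cup N_2\subseteq N_4$ (from $\sigma_{14}=\sigma_{24}=0$ and the degree ordering) with $N_3\subseteq N_1\cup N_2$ from \Cref{P_3.d_1<d_2.sigma_23=1.implica...}, which gives $\sigma_{34}=0$. The only difference is that you unpack the decomposition $N_3=\{x\}\dot\cup(N_2\cap N_3)$ explicitly instead of quoting the inclusion directly.
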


\begin{proof}
	Suppose such a path exists in $\Phi(S)$. Since $d_1\leq d_2=d_3\leq d_4$ and $\sigma_{14}=\sigma_{24}=0$, it follows that $N_1\cup N_2\subset N_4$. But, thanks to \Cref{P_3.d_1<d_2.sigma_23=1.implica...}, we know that $N_3\subset N_1\cup N_2$. Thus, $\sigma_{34}=0$, a contradiction.
\end{proof}

We are now ready to prove \Cref{prohibido.P_4.sigma_23=1}.

\begin{proof}[Proof of \Cref{prohibido.P_4.sigma_23=1}]
	Suppose there exists a split graph $S$ such that
	\[ v_1v_2v_3v_4\prec\Phi(S) \ \text{and} \ \sigma_{23}=1. \]
	Since $d_2=d_3$ by \Cref{prop.basicas.sigma_uv}(4), we only have 3 possibilities for the degrees of $v_1, v_2$ and $v_4$ in $S$: 1) $d_1\leq d_2\geq d_4$; 2) $d_1\geq d_2\leq d_4$; 3) $d_1\leq d_2\leq d_4$. However, these cases conflict with \Cref{prohibido.P_4.sigma_23=1.d_1<d_2>d_4,prohibido.P_4.sigma_23=1.d_1>d_2<d_4,prohibido.P_4.sigma_23=1.d_1<d_2<d_4}.
\end{proof}

An important immediate consequence of \Cref{prohibido.P_4.sigma_23=1} is that, in any induced path of $\Phi$, only the first or last edge can be simple. This possibility is indeed realized; that is: there exist split graphs $S$ such that $\Phi(S)$ contains an induced $v_1v_2v_3v_4$ with $\sigma_{12}=1$. As an example, we can take the split graph $(S,\{x,y,z,t\},[4])$ where $N_1=\{x\}, N_2=\{y\}, N_3=\{x,z\}$ and $N_4=\{x,y,t\}$ (in this case: $\Phi\approx P_4, \sigma_{12}=1$ and $\sigma_{23}=2=\sigma_{34}$).

\begin{corollary}
	\label{no.aristas.simples.internas.en.P_n}
	Let $S$ be a split graph and let $v_1\ldots v_n$ ($n\geq 2$) be an induced path in $\Phi(S)$. If $\sigma_{i,i+1}=1$ for some $i\in[n-1]$, then $i\in\{1,n-1\}$. 
\end{corollary}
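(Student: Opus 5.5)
The corollary follows from \Cref{prohibido.P_4.sigma_23=1} by restricting to sub-paths of length four. I will argue by contradiction: suppose $\sigma_{i,i+1}=1$ for some $i$ with $2\leq i\leq n-2$. This can only happen when $n\geq 4$; for $n\leq 3$ every index in $[n-1]$ already lies in $\{1,n-1\}$, so there is nothing to prove.

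The key observation is that any set of consecutive vertices of an induced path again forms an induced path. Indeed, the subgraph of $\Phi(S)$ induced by $\{v_{i-1},v_i,v_{i+1},v_{i+2}\}$ is a subgraph of the subgraph induced by $\{v_1,\ldots,v_n\}$. Since $P$ is induced, its only edges among these four vertices are $v_{i-1}v_i$, $v_iv_{i+1}$ and $v_{i+1}v_{i+2}$, each with the same multiplicity as in $\Phi(S)$. Thus $v_{i-1}v_iv_{i+1}v_{i+2}\prec\Phi(S)$.

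The indices are valid because $i-1\geq 1$ and $i+2\leq n$. Relabel these four vertices as $w_1w_2w_3w_4$. Then $w_1w_2w_3w_4$ is an induced path in $\Phi(S)$ whose middle edge satisfies $\sigma_{w_2w_3}=\sigma_{i,i+1}=1$. This contradicts \Cref{prohibido.P_4.sigma_23=1}, and hence $i\in\{1,n-1\}$.

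No step here is a real obstacle. The only point needing care is that multiplicities are inherited: the relevant edges of the sub-path are the same edges of $\Phi(S)$, so $\sigma$ is unchanged when passing to the sub-path.
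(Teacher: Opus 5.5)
Your proof is correct and takes the same approach as the paper, which simply states that the corollary follows immediately from \Cref{prohibido.P_4.sigma_23=1}. You make explicit the routine steps the paper leaves implicit: the consecutive sub-path $v_{i-1}v_iv_{i+1}v_{i+2}$ is itself induced, it inherits the multiplicity $\sigma_{i,i+1}$, and the cases $n\leq 3$ hold trivially.
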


\begin{proof}
	It follows immediately from \Cref{prohibido.P_4.sigma_23=1}.
\end{proof}

In this article, distances between vertices in a multigraph are measured by ignoring edge multiplicities. In other words, the metric in $\Phi$ is the usual path-metric of a simple graph. On one hand, the fact that paths maximize the diameter of connected graphs with a fixed size is a classical result. On the other hand, recall from \Cref{sec_intro} that, counting edge-multiplicities, the size of $\Phi(S)$ equals $\deg(S)$, i.e., the 2-switch-degree of $S$. Hence, the diameter of a connected $\Phi(S)$ cannot exceed the length of a path multigraph $P$ with size $\deg(S)$. Clearly, the multiplicity of each edge of $P$ should be as small as possible, in order to maximize its length. Therefore, using \Cref{no.aristas.simples.internas.en.P_n}, it is easy to conclude that the internal edges of such a $P$ with maximum length, must all have multiplicity 2, while its terminal edges (i.e., the first and last) should have multiplicity 1 or 2. We have thus obtained the following inequality.

\begin{theorem}
	\label{diam_deg_bound}
	Let $S$ be a split graph. If $\Phi(S)$ is connected, then its diameter is at most $\lceil (\deg(S)+1)/2 \rceil$.
\end{theorem}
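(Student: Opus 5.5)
Let $D$ be the diameter of $\Phi(S)$, and choose vertices $u,w\in I$ with $d_\Phi(u,w)=D$. Since $\Phi(S)$ is connected, a shortest path $P=v_0v_1\ldots v_D$ from $u=v_0$ to $w=v_D$ exists. A shortest path in any multigraph is induced: a chord $v_iv_j$ with $j\geq i+2$ would give a shorter route. So $P$ is an induced path of $\Phi(S)$ with $D+1$ vertices and $D$ edges, and the edges $v_iv_{i+1}$ are pairwise distinct edges of $\Phi(S)$. Because the size of $\Phi(S)$ counted with multiplicity equals $\deg(S)$, we obtain
\[ \deg(S)\;\geq\;\sum_{i=0}^{D-1}\sigma_{v_iv_{i+1}}. \]

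Next I bound the right-hand side from below using \Cref{no.aristas.simples.internas.en.P_n}, applied to $P$. Every edge of $P$ has multiplicity at least $1$. If $D\geq 3$, every internal edge $v_iv_{i+1}$ with $1\leq i\leq D-2$ cannot be simple, so its multiplicity is at least $2$. Only the two terminal edges can have multiplicity $1$. This gives $\sum\sigma\geq 2(D-2)+2=2D-2$ for $D\geq 3$, and hence $\deg(S)\geq 2D-2$, i.e., $D\leq (\deg(S)+2)/2$.

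Finally I convert this to the stated ceiling bound. Since $D$ is an integer, $D\leq \lfloor (\deg(S)+2)/2\rfloor$. One checks that $\lfloor (m+2)/2\rfloor=\lceil (m+1)/2\rceil$ for every integer $m$: for $m$ even both sides equal $m/2+1$, and for $m$ odd both equal $(m+1)/2$.

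The small cases need separate treatment. If $D\leq 1$ there is nothing to prove, except that when $\Phi$ is a single vertex we have $\deg(S)=0$ and the bound $\lceil 1/2\rceil=1\geq 0$ holds. If $D=2$, the path has two edges, so $\deg(S)\geq 2$ and $\lceil 3/2\rceil=2\geq D$.

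The only delicate step is justifying that a diametral path is induced in the multigraph setting, so that \Cref{no.aristas.simples.internas.en.P_n} applies. This is standard, since distances ignore multiplicities. The rest is counting.
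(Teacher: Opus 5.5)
Your proof is correct and uses the same ingredients as the paper. The size of $\Phi(S)$ counted with multiplicity is $\deg(S)$, and \Cref{no.aristas.simples.internas.en.P_n} forces every internal edge of an induced path to have multiplicity at least $2$, which gives $\deg(S)\geq 2D-2$ and hence the ceiling bound. The paper instead invokes the classical fact that paths maximize diameter for fixed size, and then argues informally about an extremal path multigraph. You apply the corollary directly to a diametral geodesic of $\Phi(S)$, which is induced, so your argument is the rigorous form of the paper's ``previous discussion''.
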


\begin{proof}
	It follows from the previous discussion.
\end{proof}		

The inequality obtained in \Cref{diam_deg_bound} is sharp. In fact, it is not difficult to see that, for every positive integer $n$, we can always construct a split graph $S_n$ such that:
\begin{enumerate}
	\item $\deg(S_n)=n$ and $\Phi_n=\Phi(S_n)$ is a path of length $\lceil (n+1)/2 \rceil$;
	
	\item $\sigma_{i,i+1}=2$ for $2\leq i\leq n-2$; 
	
	\item $\sigma_{12}=1$ and $\sigma_{n-1,n}=2$, if $n$ is odd;
	
	\item $\sigma_{12}=1=\sigma_{n-1,n}$, if $n$ is even.
\end{enumerate}

If $n=1$, take $S_1=y_1x_1x_2y_2$. If $n=2$, consider $S_2$ with $(K^{(2)}, I_2)=(\{x_1,x_2\}, \{y_1, y_2, y_3\})$, $N_1=\{x_1\}=N_3$ and $N_2=\{x_2\}$. For $n\geq 3$, we give an inductive procedure to obtain $S_{n+1}$ from $S_n$.

Let $n=2k-2$, for some $k\geq 2$, and suppose that we have already constructed $(S_n, K^{(n)}, I_n)$. By inductive hypothesis, $\Phi_n$ is a path of length $k$ satisfying (2) and (4). Hence, $I_n=\{y_1,\ldots,y_{k+1}\}$. Now, define $(S_{n+1}, K^{(n+1)},$ $I_{n+1})$ as follows: 
\[ K^{(n+1)}=K^{(n)}\dot{\cup}\{z\}, \quad I_{n+1}=I_n, \] 
\[E(S_{n+1})=E(S_n)\dot{\cup}\{xz:x\in K^{(n)}\}\dot{\cup}\{y_{k+1}z\}. \] 
Then, conditions (1) and (3) are satisfied by $S_{n+1}$.

Let $n=2k-1$, for some $k\geq 2$, and suppose that we have already constructed $(S_n, K^{(n)}, I_n)$. By inductive hypothesis, $\Phi_n$ is a path of length $k$ satisfying (2) and (3). Hence, $I_n=\{y_1,\ldots,y_{k+1}\}$. Now, define:
\[ K^{(n+1)}=K^{(n)}, \quad I_{n+1}=I_n\dot{\cup}\{y_{k+2}\}, \] 
\[E(S_{n+1})=E(S_n)\dot{\cup}\{xy_{k+2}:x\in N_i, i\leq k\}. \]
Then, conditions (1) and (4) are satisfied by $S_{n+1}$.


\section{Conclusion}\label{sec_conclusion}

In this work we investigated structural properties of the factor graph $\Phi(S)$ associated with a split graph $S$. Our analysis focused on the behavior of induced paths and cycles in $\Phi(S)$ and on the restrictions that such configurations impose on the neighborhoods of the corresponding vertices in the original split graph.

We showed that induced paths in $\Phi(S)$ generate chains of neighborhood inclusions in $S$, which in turn force a monotone behavior of the degrees of the corresponding vertices along the path. As a consequence of these structural constraints, we proved that every induced cycle in $\Phi(S)$ has length at most four. We also established that, in any induced path of $\Phi(S)$, only the first or the last edge can be simple. This property allowed us to derive an upper bound for the diameter of $\Phi(S)$ in terms of the 2-switch-degree of $S$.

Beyond these specific results, the paper provides several structural tools that help determine whether a given multigraph can arise as $\Phi(S)$ for some split graph $S$. In this sense, our results contribute toward the broader goal of recognizing factor graphs of split graphs.

There remain many open directions for future research. In particular, it would be interesting to further investigate structural properties of $\Phi(S)$ and to develop a deeper understanding of the constraints that characterize this class of multigraphs. Ultimately, one may hope for a complete characterization of factor graphs of split graphs, possibly in terms of forbidden subgraphs or other structural conditions.


\section*{Acknowledgements}
This work was partially supported by Universidad Nacional de San Luis, grants PROICO 03-0723 and PROIPRO 03-2923, MATH AmSud, grant 22-MATH-02, Consejo Nacional de Investigaciones
Cient\'ificas y T\'ecnicas grant PIP 11220220100068CO and Agencia I+D+I grants PICT 2020-00549 and PICT 2020-04064.

	
\bibliographystyle{abbrv}
\bibliography{citas__induced_paths_and_cycles_in_factor_graphs_of_split_graphs}	
	
\end{document}